\newcommand{\Real}{\mathbb{R}}
\newcommand{\diag}{\textrm{diag }}
\newtheorem{problem}{Problem}
\newtheorem{remark}{Remark}
\newtheorem{assumption}{Assumption}
\newtheorem{example}{Example}
\newtheorem{proposition}{Proposition}
\newcommand{\Img}{\textrm{Im }}
\newcommand{\matdxd}[4] {\left( \begin{array}{cc} #1 & #2\\ #3 &
                                                                 #4 \end{array} \right)}
\newcommand{\vetdc}[2] {\left( \begin{array}{cc} #1 \\#2 \end{array} \right)}
\newcommand{\vetdr}[2] {\left( \begin{array}{cc} #1 & #2 \end{array} \right)}
\title{Structured identification for network reconstruction of RC-models}
\author{Gabriele Calzavara \and Luca Consolini \and Juxhino Kavaja}
\date{All authors are with Dipartimento di Ingegneria e Architettura,
        Universit\`a degli Studi di Parma, Parco Area delle Scienze,
        181/A, 43124 Parma, Italy.
      emails: \{\texttt{gabriele.calzavara, luca.consolini,
        juxhino.kavaja}\}
      \texttt{@unipr.it}}
\begin{document}
\maketitle

\begin{abstract}
Resistive-capacitive (RC) networks are used to model various processes in engineering, physics or
biology. We consider the problem of recovering the network connection structure from measured input-output data. We address this problem as a structured
identification one, that is, we assume to have a state-space model of the system
(identified with standard techniques, such as subspace methods) and
find a coordinate transformation that puts the identified system in a
form that reveals the nodes connection structure. We characterize the solution set, that is, the set of
all possible RC-networks that can be associated to the input-output data.
We present a possible solution algorithm and show some computational experiments.
\end{abstract}

{\bf Keywords:} Structured identification, RC-networks, algebraic methods.

\section{Introduction}

Various dynamical models of processes in engineering, physics or
biology have the following form

\begin{equation} \label{eqn_RC_intro}
  \begin{array}{ll}
G \dot x(t)= S x(t) + B u(t)\\
    y(t)=C x(t)\,,
    \end{array}
\end{equation}

where $x(t) \in \Real^n$ is the state, $u(t) \in \Real^m$ is the
input, $y(t) \in \Real^p$ is the output.
We assume that $S \in \Real^{n \times n}$ is symmetric,
  $G \in \Real^{n \times n}$ is diagonal and positive definite,
  while $B$ and $C$ have no special structure.

  For instance, model~(\ref{eqn_RC_intro}) may represent a generic RC
  (resistive and capacitive) network, in which the components of $x \in
  \Real^n$ are the node potentials, $S$ is the admittance matrix and
  the diagonal elements of $G$ are the nodes
  capacitances. 
As an example, consider the RC circuit represented in
Figure~\ref{fig_rc_intro} and assume that the output is the
potential of node $A$. The corresponding model has form~\eqref{eqn_RC_intro}:

\begin{equation}
  \label{eqn_ex_into}
  \begin{gathered}
  \left(\begin{array}{cc}
          C_1&0 \\
          0 & C_2
               \end{array} \right) \left(\begin{array}{cc}
          \dot{v}_1(t) \\
          \dot{v}_2(t)
                                         \end{array} \right) \\ =                                         \left(\begin{array}{cc}
          -R_1^{-1}-R_{12}^{-1}&R_{12}^{-1} \\
          R_{12}^{-1}&      -R_2^{-1}-R_{12}^{-1}
               \end{array} \right) \left(\begin{array}{cc}
          v_1(t) \\
          v_2(t)
                                         \end{array} \right)\,,
                                       \end{gathered}
\end{equation}
\[
y(t) = \left(\begin{array}{cc}
          1&0 
               \end{array} \right) \left(\begin{array}{cc}
          v_1(t) \\
          v_2(t)
                                         \end{array} \right)\,.
  \]

We can associate a weighted undirected graph to matrix
$S$ in~\eqref{eqn_RC_intro} by considering $S$ a weighted adjacency
matrix. Namely, we define a node for each component of vector $x$
and we define an edge between node $i$ and node $j$ if the entry of row
$i$ and column $j$ of $S$ is nonzero. The numerical value of the entry
represents the edge weight. For instance,
Figure~\ref{fig_graph_associated_to_in_ex} represents the graph associated to the
model of the circuit in Figure~\ref{fig_rc_intro}.

For simplicity, in the rest of the paper we will
omit self loops when representing the graph associated
to the $S$ matrix of a system in form~\eqref{eqn_RC_intro}, 
  \begin{figure}
    \label{fig_rc_intro}
\begin{center}
    \begin{tikzpicture}
\path (0,0) coordinate (ref_gnd);
\draw
  (ref_gnd) to[R=\(R_1\)] ++(0,2) -- ++ (2,0)  node  (cond1) {} to [C=\(C_1\)] ++(0,-2) -- ++(-2,0);
\draw (cond1) to [R=\(R_{12}\)] ++ (2,0) node
  (cond2) {} to [C=\(C_2\)]  ++(0,-2) -- ++(-2,0);
  \draw (cond2) to  ++ (2,0)  to [R=\(R_2\)] ++(0,-2) -- ++(-2,0);

  \node[above] at (cond1) {A};
\fill[color=black] (ref_gnd)++(2,0)   circle[radius=0.08];
\fill[color=black] (ref_gnd)++(4,0) circle[radius=0.08];
\fill[color=black] (ref_gnd)++(2,2)   circle[radius=0.08];
\fill[color=black] (ref_gnd)++(4,2) circle[radius=0.08];
\end{tikzpicture}
 \caption{Example of an RC-circuit.}
\end{center}
\end{figure}
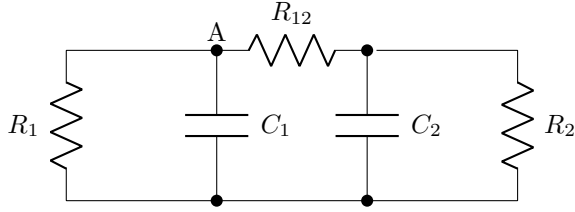

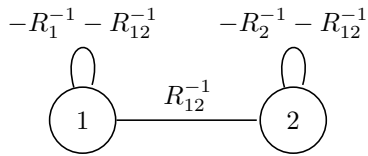
\begin{figure}
  \centering
\begin{tikzpicture}[->,>=stealth',shorten >=1pt,auto,node
  distance=2.8cm, semithick]
  \tikzstyle{every state}=[fill=white,draw=black,text=black]
\tikzset{every loop/.style={}}
  \node[state] (A)   {$1$};
  \node[state] (B) [right of=A] {$2$}; 
  \path [-]  (A) edge [loop above] node {$-R_1^{-1}-R_{12}^{-1}$} (A)
  edge node {$R_{12}^{-1}$}  (B)
  (B) edge [loop above] node {$-R_2^{-1}-R_{12}^{-1}$} (B)     ;
      \end{tikzpicture}
      \caption{Graph representation of model associated to the circuit
        in Figure~\ref{fig_rc_intro}.}
\label{fig_graph_associated_to_in_ex}
\end{figure}

  Form~(\ref{eqn_RC_intro}) can also be used to model those
  systems that have the same mathematical representation of
  RC-circuits, such as thermal systems, physical network systems
  (\cite{VANDERSCHAFT201721}), dendritic structures (\cite{ermentrout2010mathematical}), mammillary systems
  (\cite{haddad2010nonnegative}), or, more generally, diagonally symmetrizable
  compartmental systems (see~\cite{HEARON1979293}).

  Consider a second system of form
\begin{equation} \label{eqn_ident}
  \begin{array}{ll}
\dot z(t)= \hat A z(t) + \hat B u(t)\\
    y(t)=\hat C z(t)\,,
    \end{array}
\end{equation}
in which $\hat A$, $\hat B$, $\hat C$ have the same dimensions
of $S$, $B$, $C$.
In this paper we consider the following algebraic problem.

\begin{problem}
  \label{problem_id}
Consider systems~(\ref{eqn_RC_intro}),~(\ref{eqn_ident}), where
matrices $\hat A,\hat B,\hat C, B, C$ are given.
Find, if possible, an
invertible matrix $T$, a symmetric matrix $S$ and a strictly positive diagonal matrix $G$ such that
\begin{equation}
  \label{eqn_conditions}
  \left\{
  \begin{array}{ll}
T^{-1} \hat A T =  G^{-1} S \\
    \hat C T= C \\
T^{-1} \hat B =  G^{-1} B\,.
  \end{array}
  \right.
\end{equation}
\end{problem} 

In other words, we are looking for a state-space transformation of
form $z=T x$ and suitable matrices $S$ and $G$ such that~(\ref{eqn_ident}) takes on form~(\ref{eqn_RC_intro}).

Problem~\ref{problem_id} can be interpreted as a structured identification
one.  Namely, system~(\ref{eqn_ident}) represents an identified
black-box  model, obtained from experimental data with standard techniques, for instance state space
    identification methods.  Our aim is to
    check if this system, after state transformation $z=Tx$, can be given
    the form of model~(\ref{eqn_RC_intro}), and, if this is possible, find
    such a transformation. The structural requirements imposed in
    Problem~\ref{problem_id} are the following ones
    \begin{itemize}
      \item The transformed system matrix $T^{-1} \hat A T$ must
        be the product of a positive diagonal and a symmetric matrix.
      \item The input and output matrices are assigned.
      \end{itemize}
The first requirement ensures that the transformed system matrix can
be written as the product of a diagonal matrix (containing the inverse
capacities, in the case of RC-circuits) and a symmetric matrix (the admittance
matrix in the case of RC-circuits). The second requirement is due
to the fact that, in some cases, the input matrix $B$ or the output
matrix $C$ may be known from structural properties of the system at
hand. For instance, in an RC-circuit we may be able to measure
the potential of the first $n_1$ nodes, while the potentials of the
remaining ones are not accessible. In this case, $C$ would correspond
to the projection matrix on the first $n_1$ components.
Note that we may not have this requirement on $B$ or $C$. In
this case, the second, the third condition in~(\ref{eqn_conditions}),
or both could be omitted.

Further, we consider a more restrictive version of
Problem~\ref{problem_id}, based on the
observation that, in various dynamical models,
matrix $S$ in~\eqref{eqn_RC_intro} is Metzler,
that is, all its off-diagonal entries are non-negative.
For instance, in RC-networks, off-diagonal entries correspond
to the values of the resistances connecting the network nodes (see~\eqref{eqn_ex_into}). This suggests the following formulation.
\begin{problem}
  \label{problem_mer}
  Solve Problem~\ref{problem_id} with the additional requirement
  that $S$ is Metzler.
  \end{problem}
  Further, if Problem~\ref{problem_mer} has multiple solutions,
  one may minimize the number of nonzero components of $S$,
  that is, minimize $\|S\|_0$, the so-called zero-norm of $S$.
This
follows the principle of parsimony of finding the
simplest model of form~\ref{problem_id}
that fits the data. This leads to the following additional problem.
\begin{problem}
  \label{problem_mimin}
  Find the solutions of  Problem~\ref{problem_mer} in which $\|S\|_0$
  is minimum.
\end{problem}
For instance, in an RC-circuit, the nonzero elements of $S$ represent
the resistive connections between the nodes. Hence, minimizing
$\|S\|_0$ corresponds to reducing the overall number of resistive components.


\subsection{Statement of contribution}

Regarding Problem~\ref{problem_id},
we will show that it is convenient to parameterize the set of solutions
as $T= P Q \sqrt{G}$, where $P$, $Q$ is the polar decomposition
of $T \sqrt{G}^{-1}$. In particular,
\begin{itemize}
\item Proposition~\ref{prop_D_G} shows that $P$ and $G$  are the solution
  of a problem with a reduced number of
  unknowns. Essentially, this result leverages the
  symmetry of $S$. Further, if either the second or the third
  condition in~(\ref{eqn_conditions}) is missing, $P$ and $G$ are the solution
  of a convex problem. We will consider this last case in more detail.
 We will mention that, in many cases, the solution $P$, $G$
  is unique up to a scaling factor.
  \item Proposition~\ref{prop_solut_Q} parameterizes all solutions
    of~$Q$ corresponding to each solution $P$, $G$.
  \end{itemize}

  These results can also be used to reduce the number of
  unknowns in Problems~\ref{problem_mer} and~\ref{problem_mimin}. Anyway, these
  last two problems are more difficult than
  Problem~\ref{problem_id}. To solve them, we will resort to
  general local search algorithms.

\subsection{Comparison with literature}

A problem similar in structure to Problem~\ref{problem_id}, but more general,
consists in solving the
following system with respect to unknown parameter vector $\theta$
\begin{equation}
  \label{eqn_conditions_gen}
  \left\{
  \begin{array}{ll}
T^{-1} \hat A T =  A(\theta) \\
    \hat C T= C(\theta) \\
T^{-1} \hat B =  B(\theta)\,,
  \end{array}
  \right.
\end{equation}
in which matrices $A$, $B$, $C$ depend on $\theta$.
Problem~\ref{problem_id} may be considered a special case
of~(\ref{eqn_conditions_gen}), in which $C$ and $B$ do not depend on
$\theta$ and the only constraint on $A$ is symmetry.

Problem~(\ref{eqn_conditions_gen}) has been extensively studied in recent literature.
A common approach consists in two
phases. First, one 
finds a black-box model (this is in general an easy one, for instance,
resorting to subspace-based methods). Second, one finds a coordinate
transformation $T$ and a parameter vector $\theta$ that
satisfy~(\ref{eqn_conditions_gen}). In the general case, this second
step is not an easy
task. In fact, even assuming, as commonly done, that $A$, $B$, $C$ are
a linear function of $\theta$, Problem~(\ref{eqn_conditions_gen}) is
bilinear and, thus, nonconvex.

This approach has been introduced
in~\cite{xie2002estimate} and studied in various subsequent works.
For instance,~\cite{parrilo2003initialization} studied the
problem of parameter
initialization. Works~\cite{6883189},~\cite{YU201854},~\cite{8844819},~\cite{8926483} present different
numerical approaches for the solution.

With respect to these general approaches, this work leverages the
special structure imposed by Problem~\eqref{eqn_RC_intro}, in
particular the symmetry of $S$ to obtain specific properties (see
Propositions~\ref{prop_D_G}  and~\ref{prop_solut_Q})
that do not apply to the more general
case~\eqref{eqn_conditions_gen}. As far as we know, the results
presented in these two propositions are new, perhaps also due to the
specificity of the problem discussed in this paper.

In literature, we can also find approaches for network topology
reconstruction not based on structured identification, such as~\cite{gonccalves2008, doddi2018,
  talukdar2020}.

{\bf Notation:} 
Matrix $A \in \Real^{n\times n}$ is diagonalizable
if there exist a nonsingular matrix $V$ and a diagonal
matrix $\Lambda$ such that $A V=V \Lambda$. The columns
of $V$ are the right eigenvectors of $A$.
Set $W=V^{-1}$, then we have that $W A= V^{-1} A= \Lambda
V^{-1}=\Lambda W$, which shows that  the rows of $W$ are left
eigenvectors of $A$. 
We also write that $(V,\Lambda,W)$ is a diagonalization of $A$. We denote the orthogonal group over
$\Real$ by
\[
  \begin{array}{ll}
O(n)=\{M \in \Real^{n \times n} \textrm{ such that } M
\textrm{ is invertible}  \textrm{ and } M^T M = I\},
\end{array}
    \]
that is the set of real orthonormal matrices of dimension $n$.
Given a subspace $V \subset \Real^n$, we will denote by $V^\perp$ its
orthogonal subspace.

\section{Discussion of Problem~\ref{problem_id}}

The following proposition presents a necessary condition for the
feasibility of Problem~\ref{problem_id}.

\begin{proposition}
\label{prop_two_cond}
Problem~\ref{problem_id} has a solution only if $\hat A$ is diagonalizable and has real eigenvalues.
\end{proposition}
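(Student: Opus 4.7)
The plan is to exploit the fact that the first condition in~\eqref{eqn_conditions} forces $\hat A$ to be similar to $G^{-1} S$, and then to exhibit a further similarity that symmetrizes this product.

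First, I would observe that if a solution $(T, S, G)$ exists, then $\hat A$ is similar to $G^{-1} S$ via $T$, so $\hat A$ and $G^{-1} S$ share the same Jordan structure and spectrum. Hence it suffices to prove that $G^{-1} S$ is diagonalizable with real eigenvalues.

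Next, since $G$ is strictly positive diagonal, its square root $G^{1/2}$ is well defined, diagonal, and invertible. I would write
\[
G^{-1} S \;=\; G^{-1/2}\bigl( G^{-1/2} S\, G^{-1/2}\bigr) G^{1/2},
\]
which displays $G^{-1} S$ as similar to $M := G^{-1/2} S G^{-1/2}$. Because $S$ is symmetric and $G^{-1/2}$ is symmetric (being diagonal), $M$ is itself real symmetric.

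Finally, I would invoke the spectral theorem: any real symmetric matrix is orthogonally diagonalizable with real eigenvalues. Therefore $M$ has real spectrum and is diagonalizable, and by transitivity of similarity so do $G^{-1} S$ and $\hat A$. There is no real obstacle here; the only thing to be careful about is that positivity of $G$ is essential to extract $G^{1/2}$ as a real invertible diagonal matrix, which is exactly what makes the symmetrization step go through.
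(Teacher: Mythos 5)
Your argument is correct and is essentially the paper's own proof: both reduce to the observation that $\hat A$ is similar to the real symmetric matrix $G^{-1/2} S G^{-1/2}$ (the paper conjugates $T^{-1}\hat A T$ by $G^{1/2}$ in one step, while you split the similarity into $\hat A \sim G^{-1}S \sim G^{-1/2}SG^{-1/2}$), and then both invoke the spectral theorem. No gaps.
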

\begin{proof}
   Assume that Problem~\ref{problem_id} has a solution.
      By the first of~(\ref{eqn_conditions}) it follows that
      $G^{1/2} T^{-1} \hat A T G^{-1/2}= G^{-1/2} S G^{-1/2}$. Note
      that this last matrix is symmetric, hence $\hat
      A$ is diagonalizable and has real eigenvalues, being similar to a symmetric matrix.
\end{proof}

Due to the previous proposition, we will make this assumption
throughout the paper.
\begin{assumption}
  \label{assum_diag}
$\hat
      A$ is diagonalizable and has real eigenvalues
\end{assumption}

\begin{remark}
By Proposition~\ref{prop_two_cond}, if Assumption~\ref{assum_diag}
does not hold for $\hat A$, then Problem~\ref{problem_id} does not
have a solution. This means that the identified system has not the
structure of an RC-network.
\end{remark}

We will parameterize the set of possible solutions $T$
of Problem~\ref{problem_id} as
\begin{equation}
  \label{eqn_par_of_T}
T= P Q \sqrt{G}\,,
\end{equation}
where $P$ is a symmetric and positive definite matrix and $Q \in O(n)$.
Note that $P Q$ corresponds to the left polar decomposition of $T
\sqrt{G}^{-1}$, which is unique, being $T \sqrt{G}^{-1}$ invertible.
In particular, $P$ corresponds to a scaling and $Q$ to a rotation or
reflection, further $P=\sqrt{T G^{-1} T^T}$.
In the following, we will show that
parameterization~\eqref{eqn_par_of_T} is convenient since couple $P$, $G$ can be found
separately from $Q$.
As a first step, the following proposition shows that the feasibility
of Problem~\ref{problem_id} is equivalent to the existence of a
solution of an equation independent of $Q$. The proof is presented in the Appendix.

\begin{proposition}
  \label{prop_M_E}
Problem~\ref{problem_id} has a solution $T, S, G$ if and only if there exists a
positive definite matrix $M$ such that
\begin{equation}
  \label{eqn_cond_eq}
  \left\{
\begin{array}{lll}
  \hat A M=M \hat{A}^T\\
  \hat C M \hat C^T=C G^{-1} C^T\\
  \hat B^T M^{-1} \hat B = B^T G^{-1} B\\
  \hat C \hat B= C G^{-1} B.
\end{array}
\right.
\end{equation}

Moreover, $M=P^2$, where $P$ is defined as in~\eqref{eqn_par_of_T}\,.
 \end{proposition}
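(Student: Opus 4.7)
The plan is to prove both directions by exploiting the parameterization $T = P Q \sqrt{G}$, with $P$ symmetric positive definite and $Q$ orthogonal, and to observe that $M := P^2 = T G^{-1} T^T$ is a quadratic invariant of $T$ in which the rotational factor $Q$ has been eliminated.

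For necessity, I would assume a solution $(T,S,G)$ of Problem~\ref{problem_id} is given and define $M := T G^{-1} T^T$, which is automatically symmetric and positive definite. The four equations of~\eqref{eqn_cond_eq} then follow by direct substitution. From $T^{-1}\hat A T = G^{-1}S$, multiplying on the left by $T$ and on the right by $G^{-1}T^T$ yields $\hat A M = T G^{-1} S G^{-1} T^T$; the right-hand side is symmetric since $S^T = S$ and $G$ is diagonal, hence $\hat A M = M \hat A^T$. Inserting $\hat C T = C$ into $\hat C M \hat C^T$ and $T^{-1}\hat B = G^{-1} B$ into $\hat B^T M^{-1}\hat B$ produces the second and third equations, while the fourth reads $\hat C \hat B = (\hat C T)(T^{-1}\hat B) = C G^{-1} B$.

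For sufficiency, given $M$ and $G$ meeting~\eqref{eqn_cond_eq}, set $P := M^{1/2}$ and $\tilde A := P^{-1}\hat A P$. The first equation $\hat A P^2 = P^2 \hat A^T$ rearranges to $P^{-1}\hat A P = P \hat A^T P^{-1}$, whose right-hand side equals $\tilde A^T$, so $\tilde A$ is symmetric. Looking for $Q \in O(n)$ such that $T := P Q \sqrt{G}$ solves Problem~\ref{problem_id}, I observe that the candidate $S := G T^{-1}\hat A T = \sqrt{G}\,Q^T \tilde A\,Q \sqrt{G}$ is automatically symmetric, independently of $Q$, so the first condition of~\eqref{eqn_conditions} is fulfilled.

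The main obstacle is then to produce an orthogonal $Q$ satisfying the two remaining conditions simultaneously. Substituting $T = P Q \sqrt{G}$ into $\hat C T = C$ and $T^{-1}\hat B = G^{-1}B$ transforms them, after transposing the first, into the single block identity $QU = V$, where $U := \bigl[\sqrt{G}^{-1}C^T \;\; \sqrt{G}^{-1}B\bigr]$ and $V := \bigl[P\hat C^T \;\; P^{-1}\hat B\bigr]$. By the standard linear-algebra fact that an orthogonal $Q$ with $QU=V$ exists if and only if $U^T U = V^T V$ (proved by matching compact singular value decompositions and extending arbitrarily on the orthogonal complement of the column spaces), it suffices to verify this Gram identity. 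A block-wise calculation shows that its four entries reduce exactly to equations~2, 3, and~4 of~\eqref{eqn_cond_eq}, with the two off-diagonal blocks both yielding equation~4. Hence the required $Q$ exists, and $M=P^2$ by construction.
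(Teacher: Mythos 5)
Your proposal is correct and follows essentially the same route as the paper: necessity by substituting $M = T G^{-1} T^T$ into the three conditions, and sufficiency by factoring $M$, reducing the input/output constraints to a single block equation $QU=V$, and invoking the standard Gram-matrix criterion for the existence of an orthogonal $Q$ (the paper's Proposition~\ref{prop_Gram}). The only cosmetic differences are that you use the symmetric square root $P=M^{1/2}$ where the paper uses a Cholesky factor, and you sketch the Gram lemma rather than citing it.
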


 \vspace{10pt}
 
Note that, with respect to form~\eqref{eqn_par_of_T}, equation~\eqref{eqn_cond_eq} contains variables $M=P^2$, $G$,
but does not contain $Q$.
The structure of~\eqref{eqn_cond_eq} can be simplified by diagonalizing
$\hat A$. It particular, if
$(V,\Lambda,W)$ is a diagonalization of $\hat A$ (i.e., $\hat A=V
\Lambda W$), the following proposition
shows
that the first equation in~\eqref{eqn_cond_eq} can be substituted with
$M=V D V^T$, where $D$ is a matrix that commutes with
$\Lambda$ (i.e., $D \Lambda=\Lambda D$).  
\begin{proposition}
  \label{prop_diagon}
  Let  $(V,\Lambda,W)$ be a diagonalization of $\hat{A}$
 and let $M \in \Real^{n \times n}$, then 
the following statements are equivalent

i)  $\hat A M=M \hat{A}^T$

ii) there exists a matrix $D$, that commutes with $\Lambda$, such that $M=V D V^T$.

\end{proposition}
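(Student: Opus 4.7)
The plan is to prove the equivalence by direct algebraic manipulation, exploiting the fact that from the diagonalization $(V,\Lambda,W)$ we have $\hat A = V\Lambda V^{-1}$ and therefore $\hat A^T = V^{-T}\Lambda V^T$ (since $\Lambda$ is diagonal). The whole argument reduces to conjugating $M$ into the eigenbasis of $\hat A$ and reading off the commutation condition with $\Lambda$.

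For the implication (ii) $\Rightarrow$ (i), I would simply substitute $M=VDV^T$ into both sides. The left-hand side becomes
\[
\hat A M = V\Lambda V^{-1} \cdot V D V^T = V\Lambda D V^T,
\]
and the right-hand side becomes
\[
M\hat A^T = V D V^T \cdot V^{-T}\Lambda V^T = V D \Lambda V^T,
\]
using $V^T V^{-T}=I$. The commutation $\Lambda D=D\Lambda$ then makes the two expressions equal.

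For the converse (i) $\Rightarrow$ (ii), the key move is to define
\[
D := V^{-1} M V^{-T} = W M W^T,
\]
so that trivially $M = V D V^T$. Plugging $\hat A = V\Lambda V^{-1}$ and $\hat A^T = V^{-T}\Lambda V^T$ into the hypothesis $\hat A M = M\hat A^T$ and then multiplying on the left by $V^{-1}$ and on the right by $V^{-T}$ gives $\Lambda D = D\Lambda$ after cancellation of the $V^{-1}V$ and $V^T V^{-T}$ factors. Since $V$ is invertible the two transformations are reversible, so $D$ is well-defined and uniquely determined by $M$.

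There is really no obstacle here: the only thing to be careful about is keeping the transposes and inverses in the right place, since $\hat A^T$ is diagonalized by $V^{-T}$ rather than by $V$. Once one writes down $\hat A^T = V^{-T}\Lambda V^T$ explicitly, the equivalence becomes a one-line computation in each direction, and no assumption on $\Lambda$ beyond being diagonal (in particular, no distinct-eigenvalue hypothesis) is needed.
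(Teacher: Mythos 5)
Your proof is correct and follows essentially the same route as the paper's: both directions substitute $\hat A = V\Lambda V^{-1}$ (hence $\hat A^T = V^{-T}\Lambda V^T$), and the converse defines $D = V^{-1}MV^{-T}$ exactly as the paper does before reading off $\Lambda D = D\Lambda$. No gaps.
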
   

\begin{proof}
i) $\Rightarrow$ ii) Substituting $\hat A=V \Lambda V^{-1}$ in i) we obtain 
$V \Lambda V^{-1} M =M V^{-T} \Lambda V^{T}$, which implies
$\Lambda V^{-1} M V^{-T} =V^{-1} M V^{-T} \Lambda$. Set $D=V^{-1} M V^{-T}$, then
$\Lambda D= D \Lambda$ and $M=V D V^T$.

ii) $\Rightarrow$ i) Let $D$ be any matrix such that $\Lambda D=D
\Lambda$ and set $M=V D V^T$. Then
$\hat A M=\hat A V D V^T= V \Lambda V^{-1} V D V^T= V \Lambda D V^T=
V D \Lambda V^T= V D V^T V^{-T} \Lambda V^T= M \hat A^T$.

\end{proof}





    

\begin{remark}
 \label{rem_D_comm_Lambda}
The requirement that $D$ commutes with $\Lambda$ limits the actual number of unknown
entries of $D$. In fact, setting
$\Lambda=\diag\{\lambda_1,\ldots,\lambda_n\}$, $D=(d_{ij})$ commutes with
$\Lambda$ if and only if
\[
  d_{ij}=0, \textrm{ for all } i,j \textrm{ such that } \lambda_i \neq \lambda_j\,.
\]
For instance, if all eigenvalues of $A$ are distinct, $D$ must be
diagonal. In the general case, $D$ has a block-diagonal structure.
\end{remark}

Combining the results of Propositions~\ref{prop_M_E}
and~\ref{prop_diagon}, we derive the following result.

\begin{proposition}
    \label{prop_D_G}
Problem~\ref{problem_id} has a solution $T, S, G$ if and only if there
exist
a symmetric matrix $D$ and a diagonal matrix $G$ such that

\begin{equation}
  \label{eqn_problem_nonl}
  \left\{
    \begin{array}{lll}
      D >0 \\
  \Lambda D= D \Lambda \\
   G > 0\\
  \hat C V D V^T \hat C^T=C G^{-1} C^T\\
  \hat B^T W^T D^{-1} W \hat B = B^T G^{-1} B\\
  \hat C \hat B= C G^{-1} B.
    \end{array}
 \right.
\end{equation}

Moreover, $P=\sqrt{V D V^T}$, where $P$ is defined in~\eqref{eqn_par_of_T}.
\end{proposition}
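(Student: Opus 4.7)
The plan is to obtain Proposition~\ref{prop_D_G} as a direct combination of Propositions~\ref{prop_M_E} and~\ref{prop_diagon}, so the argument is essentially a translation of the unknown $M$ into the parameterization $M = V D V^T$. I would organize the proof as an if-and-only-if chain, handling first the structural constraint $\hat A M = M \hat A^T$, then the positivity/symmetry constraints, and finally the three remaining equations of \eqref{eqn_cond_eq}.

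First I would invoke Proposition~\ref{prop_M_E} to reduce the feasibility of Problem~\ref{problem_id} to the existence of a positive definite matrix $M$ satisfying the system \eqref{eqn_cond_eq}. The first equation of that system, $\hat A M = M \hat A^T$, is precisely the hypothesis of Proposition~\ref{prop_diagon}, so I would apply that result to replace it by the parameterization $M = V D V^T$ with $\Lambda D = D \Lambda$. This accounts for the first two conditions of \eqref{eqn_problem_nonl}.

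Next I would translate the symmetry and positivity requirements on $M$ into constraints on $D$. Since $V$ is invertible, $M = V D V^T$ is symmetric iff $D$ is symmetric, and by congruence (Sylvester's law of inertia) $M$ is positive definite iff $D$ is positive definite; this yields the $D > 0$ condition (and the stated symmetry of $D$). Then I would rewrite the three remaining equations of \eqref{eqn_cond_eq} under the substitution $M = V D V^T$: the second equation becomes $\hat C V D V^T \hat C^T = C G^{-1} C^T$ directly; for the third, I compute
\[
M^{-1} = (V D V^T)^{-1} = V^{-T} D^{-1} V^{-1} = W^T D^{-1} W,
\]
using the notation $W = V^{-1}$ from the paper, giving $\hat B^T W^T D^{-1} W \hat B = B^T G^{-1} B$; the fourth equation does not involve $M$ and is carried over unchanged. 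The condition $G > 0$ is inherited from Problem~\ref{problem_id}.

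For the final statement, $P = \sqrt{V D V^T}$ follows immediately: Proposition~\ref{prop_M_E} gives $M = P^2$ with $P$ symmetric positive definite, and under the parameterization $M = V D V^T > 0$ the symmetric positive definite square root is $P = \sqrt{V D V^T}$. The main ``obstacle'' here is purely bookkeeping rather than a substantive difficulty: one must check that both directions of the equivalence go through (starting from $D,G$ satisfying \eqref{eqn_problem_nonl}, setting $M = V D V^T$ recovers a solution of \eqref{eqn_cond_eq}), and one must be careful to express $M^{-1}$ in terms of $W$ rather than $V^{-T}$ so that the formulation matches the statement.
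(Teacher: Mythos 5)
Your proposal is correct and follows exactly the route the paper intends: the paper derives Proposition~\ref{prop_D_G} by directly combining Propositions~\ref{prop_M_E} and~\ref{prop_diagon}, and your substitution $M = V D V^T$, the congruence argument for $D>0$, and the identity $M^{-1} = W^T D^{-1} W$ are precisely the bookkeeping steps that combination requires. Nothing is missing.
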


In Problem~\eqref{eqn_problem_nonl} the optimization variables are $D$
and $G$. This problem is nonconvex, since variable $D$ appears in it together with its inverse. 
\begin{remark}
  If the third condition is not present in (\ref{eqn_conditions}),
the third and fourth conditions disappear from~\eqref{eqn_cond_eq} and,
  setting $H=G^{-1}$, problem~\eqref{eqn_problem_nonl} reduces to a
  convex one:
\begin{equation}
  \label{eqn_prob_conv}
  \left\{
  \begin{array}{lll}
    D >0\\
  \Lambda D= D \Lambda \\
   H > 0\\
  \hat C V D V^T \hat C^T=C H C^T\,.
  \end{array}
  \right.
\end{equation}

The fact that the third condition is not present in
(\ref{eqn_conditions}) means that we do not impose any structural
requirement on matrix $B$. The solution of~\eqref{eqn_prob_conv}
is not unique. In fact, if $D$, $H$ is a solution
of~\eqref{eqn_problem_nonl}, any scaling $\alpha D$, $\alpha H$, with
$\alpha>0$ is still a solution.

In particular, if $\hat A$ has distinct eigenvalues, $D$ must be diagonal
and Problem~\eqref{eqn_prob_conv} reduces to finding positive diagonal
matrices $D$, $H$ such that
\begin{equation}
  \label{eqn_lin_case}
\hat C V D V^T \hat C^T=C H C^T.
\end{equation}
The set of all solutions of~\eqref{eqn_prob_conv}
corresponds
to a polyhedral cone and can be expressed as a conical combination of
a finite set of vertices (by Weyl-Minkowski theorem), that is, we can find vectors
$v_1,\ldots,v_l$ (called generators) such that the set of all solutions 
of~\eqref{eqn_prob_conv} is
\begin{equation}
  \label{eqn_sol_gen}
\{\alpha_1 v_1+\alpha_2 v_2+ \ldots +
  \alpha_l v_l, \quad \alpha_1,\alpha_2,\ldots, \alpha_l >0\}.
\end{equation}

These considerations also hold if the second
condition of (\ref{eqn_conditions}) is not present.
\end{remark}

\begin{remark}
  \label{rem_num_sol}
We present an intuitive discussion on the number of distinct solutions of~\eqref{eqn_prob_conv}.
Assuming $\hat A$ has distinct eigenvalues, so that $D$ is diagonal,
Problem~\eqref{eqn_prob_conv} reduces to finding positive diagonal
matrices $D$, $H$ such that~\eqref{eqn_lin_case} holds.
The solutions of~\eqref{eqn_lin_case} are represented by
vector $x=[\diag{D}, \diag{H}]$, that contains the elements on the
diagonal of the two matrices $D$ and $H$.

Note that, since the left and right-hand sides of this
expression are symmetric $p \times p$ matrices, this corresponds to a set of
$n_e=\frac{p(p+1)}{2}$ equations. We have $n_u=2 n$ unknown terms (the
diagonal elements of $D$ and $H$).
Hence, for a generic choice of problem data (i.e., matrices $\hat A$,
$C$, $\hat C$ are randomly selected) we have a solution consisting of
a unique ray (that is, unique up to scaling) if $n_e \geq
n_u-1$, that is $p \geq \frac{\sqrt{16n-7}-1}{2}$, where the $-1$ term
is due to the fact that a ray has dimension one.
However, if the problem data are not generic, we may have multiple
solutions even if this condition is satisfied. For instance, if $C$ is
a projection on the first $p$ components, then term $C H C^T$ does not
contain the last $n-p$ elements of the diagonal of $H$. Hence, these
are left undetermined and can be chosen as arbitrary positive
values. In this case, the number of remaining unknowns is $n_u=n+p$,
so that, if remaining parameters $\hat A$, $\hat C$ are generic, we
have only one solution for $D$ and for the first $p$ elements of the
diagonal of $H$ (up to a scaling factor) if $n_e \geq n_u-1$, that is
\begin{equation}
  \label{eq_cond_minim_m}
  p \geq \frac{\sqrt{8n-7}+1}{2}\,.
\end{equation}
These considerations intuitively justify the fact that, in generic cases, if $p$
is sufficiently high, equation~\eqref{eqn_prob_conv} has only one
solution (up to a scaling factor). Our numerical experiments confirm
this fact, however, we do not have a formal
proof.
\end{remark}

\begin{example}
  
  \label{example_1}

  Consider the RC circuit depicted in Figure~\ref{fig_circ_ex1}.
  \begin{figure}
    \label{fig_rc_intro}
    \begin{center}
\begin{tikzpicture}
\path (0,0) coordinate (ref_gnd);

\draw
  (ref_gnd) node  (cond3) {} to [C=\(C_3\)] (0,2)
  (2,0)  node  (cond1) {} to [C=\(C_4\)] (2,2)
  (4,0)  node  (cond4) {} to [C=\(C_1\)] (4,2)
  (6,0)  node  (cond2) {} to [C=\(C_2\)] (6,3)
  (0,3) to[R=\(R_1\)]  (4,3)
  (0,2) to[R=\(R_2\)]  (2,2)  
  (2,2) to[R=\(R_3\)]  (4,2) 
  (4,3) to[R=\(R_4\)]  (6,3) 
  (0,2) -- (0,3)
  (4,2) -- (4,3)
  (0,0) -- (6,0)

 (2,2) node[above] {D}
 (0,3) node[above] {C}
 (4,3) node[above] {A}
 (6,3) node[above] {B}
;

\fill[color=black] (ref_gnd)++(0,3)   circle[radius=0.08];
\fill[color=black] (ref_gnd)++(2,2) circle[radius=0.08];
\fill[color=black] (ref_gnd)++(4,3)   circle[radius=0.08];
\fill[color=black] (ref_gnd)++(6,3) circle[radius=0.08];
\end{tikzpicture}
\caption{Circuit used in Example~\ref{example_1}.}
\label{fig_circ_ex1}
\end{center}
\end{figure}
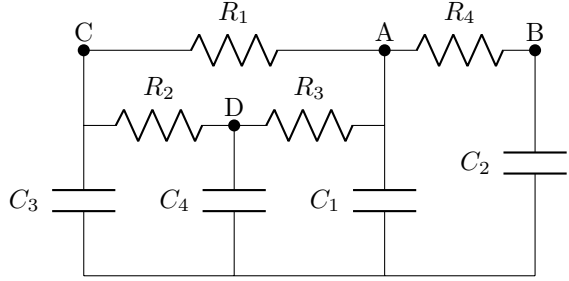
If  $x(t)\in \mathbb{R}^{4\times 1}$ represents the node potentials,
$C_1=C_2=C_3=C_4=1$, $R_1=R_4=R_2=1$, $R_3=2$,
the model of the system corresponds to (\ref{eqn_RC_intro}) with $G=I$ and
\begin{equation}
  \label{eqn_S_ex1}
  S=\left(\begin{array}{cccc} -4 & 1 & 1 & 2\\ 1 & -1 & 0 & 0\\ 1 &
                                                                       0 & -2 & 1\\ 2 & 0 & 1 & -3 \end{array}\right).
\end{equation}                                                                  

We assume that the system is autonomous (i.e., $B$ is not present) and
the outputs are the potentials of the first three nodes, that is
\[
C=\left(\begin{array}{cccc} 1 & 0 & 0 & 0\\ 0 & 1 & 0 & 0\\ 0 & 0 & 1
                                      & 0 \end{array}\right).
                                  \]

Suppose that we do not known matrices $G$ and $S$,
but we do know matrix $C$, since our output consists in the
potentials of the first three nodes. Assume also that, by using
standard identification techniques (for instance
subspace-based methods), we are able to identify a state-space model in
form
\[
  \begin{array}{ll}
    \dot z(t)= \hat A z(t)\\
    y(t)=\hat C z(t)\,,
    \end{array}
 \]
    with
{\small
    \[
\hat A=\left(\begin{array}{cccc} -10 & -4 & -23 & 5\\ 1 & -1 & 3 & -1\\ 3 &
                                                                       1
                                          & 7 & -2\\ 1 & -1 & 3 &
                                                                  -4 \end{array}\right),                                                                                               \hat C= \left(\begin{array}{cccc} 1 & 0 & 3 & -1\\ 0 & 1 & 0 & 0\\ 0 & -1 & 1 & 0 \end{array}\right).
\]}

Then, solving Problem~\ref{problem_id} consists in finding a
coordinate transformation $x=T z$ and matrices $G$ and $S$ such
that~\eqref{eqn_conditions} holds. In particular, matrix
$S$ is a key piece of information since it allows to reconstruct
the network structure.
Note that assumption~\ref{assum_diag} is satisfied.
We use parameterization~\eqref{eqn_par_of_T} and
apply Proposition~\ref{prop_D_G} to find matrices $P$ and $G$.
Since we do not have any requirement on input matrix $B$, we have to
solve convex problem~\eqref{eqn_prob_conv}.
Further, since $\hat{A}$ has distinct eigenvalues, by
Remark~\ref{rem_D_comm_Lambda}, $D$ must be diagonal, so that
Problem~\eqref{eqn_prob_conv} reduces to the following linear one, in
which
the variables are the diagonal matrices $D$ and $H$,
\begin{equation}
  \label{eqn_prob_conv_ex}
  \left\{
  \begin{array}{lll}
   D >0\\
   H > 0\\
  \hat C V D V^T \hat C^T=C H C^T\,.
  \end{array}
  \right.
\end{equation}

As previously noted, the solution to this problem is not unique,
since,
if $D$ and $H$ are a solution of~\eqref{eqn_prob_conv_ex} for any
$\alpha \in \Real$, with $\alpha >0$, also $\alpha D,
\alpha H$ is a solution of~\eqref{eqn_prob_conv_ex}.
The solution set has form~\eqref{eqn_sol_gen}, in particular there are
two generators,  so that the set of all solutions is given by
\begin{equation}
  \label{eqn_sol_ex_1}
x=\{v_1 \alpha_1,v_2 \alpha_2\},
\end{equation}
with $v_1=  \left(\begin{array}{cccccccc} 1.98 & 11.334 & 7.5 & 3.186
                    & 1 & 1& 1 & 0 \end{array}\right)$, \\ $v_2= \left(\begin{array}{cccccccc} 0 & 0 & 0 & 0 & 0 & 0 & 0 & 1 \end{array}\right)$.

All solutions are parameterized by positive parameters $\alpha_1$, $\alpha_2$. In particular, $\alpha_2$
is related to the fact that we cannot know the capacitance of the
unmeasured node. Moreover, $D$ depends only on
$\alpha_1$, so that the $P$ component of the solution $T$
in unique apart from an unknown scaling factor.

\vspace{10pt}

\end{example}




At this point, we compute the rotation component $Q$ of 
parameterization~\eqref{eqn_par_of_T}.
 
Let $P=\sqrt{M},G$ be solutions of~\eqref{eqn_cond_eq} and let $T$ be defined as
in~\eqref{eqn_par_of_T}. Then, substituting $T$ in the second and
third of~\eqref{eqn_conditions}, we obtain the following conditions
\begin{equation}
  \label{eqn_cond_Q}
  \begin{array}{ll}
\hat C P Q \sqrt{G}= C \\
\sqrt{G} Q^T P^{-1} \hat B =  G^{-1} B\,.
\end{array}
    \end{equation}
These conditions can be rewritten as $Q Z=  W$, where
\begin{equation}
    \label{eqn_S_Z}
W=\left(\begin{array}{ll}
               P \hat C^T&
               P^{-1} \hat B
              \end{array}\right),\,Z=\left(\begin{array}{ll}
               \sqrt{G}^{-1} C^T&
               \sqrt{G}^{-1}  B
              \end{array}\right)\,.
\end{equation}

Note that, by the second, the third and the fouth
of~\eqref{eqn_cond_eq}, $Z^TZ=W^TW$, so that
$Z$ and $W$ have the same rank.

In the following computations, it is convenient to assume that $Z$ and $W$
are full column-rank since, in this case, their left inverses $Z^+$
and $W^+$ are well-defined. 
If $Z$ and $W$ are not full column-rank, it is possible to reduce them
to full column-rank matrices by right multiplying them by a suitable
matrix $L$, as a consequence of the following simple algebraic property.
\begin{proposition}
Let $n,m,r$ be positive natural numbers, $Z,W \in \Real^{n \times m}$ with $Z^T Z=W^T W$, $Q \in \Real^{n
  \times n}$ and let $L \in
\Real^{m \times r}$ be such that $Z L$ is full column-rank and $\Img ZL
= \Img Z$, then the
following statements are equivalent:

i) $Q Z =W$

ii) $Q Z L = W L$.
\end{proposition}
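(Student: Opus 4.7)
The plan is to prove only the nontrivial implication ii) $\Rightarrow$ i), since i) $\Rightarrow$ ii) is immediate: just right-multiply $QZ=W$ by $L$.

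The first key step will be to exploit the hypothesis $Z^TZ=W^TW$ to show that $\ker Z = \ker W$. For any $v \in \Real^m$, one has $\|Zv\|^2 = v^T Z^T Z v = v^T W^T W v = \|Wv\|^2$, so $Zv=0$ iff $Wv=0$. This kernel identification is what will allow representations of $Z$ in terms of $ZL$ to transfer over to $W$ in terms of $WL$.

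The second step will be to use $\Img ZL = \Img Z$ to produce a matrix $N \in \Real^{r \times m}$ with $Z = (ZL)N$: such $N$ exists because every column of $Z$ lies in $\Img ZL$. Rewriting this as $Z(I_m - LN)=0$, every column of $I_m - LN$ lies in $\ker Z$, and by the previous step also in $\ker W$, so $W(I_m - LN)=0$, i.e., $W=(WL)N$.

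Finally, assuming ii), I would right-multiply $QZL = WL$ by $N$ to obtain $Q(ZL)N=(WL)N$, which by the two factorizations above collapses to $QZ = W$, giving i). The only delicate point is the transfer of the factorization from $Z$ to $W$, and this is precisely the role played by the Gram-matrix hypothesis $Z^TZ = W^TW$; the full-column-rank assumption on $ZL$ is not actually needed for the equivalence itself, only the image condition is used, though full column rank ensures that $N$ is uniquely determined.
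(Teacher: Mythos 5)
Your proof is correct. Both you and the paper prove ii) $\Rightarrow$ i) by the same skeleton: use $\Img ZL=\Img Z$ to produce $N$ with $Z=(ZL)N$, transfer this factorization to $W$ to get $W=(WL)N$, and then right-multiply $QZL=WL$ by $N$. The difference lies in how the transfer is justified. The paper invokes its Gram-matrix lemma (Proposition~\ref{prop_Gram}) to obtain an orthogonal $\hat Q$ with $\hat QZ=W$, and then computes $WLN=\hat QZLN=\hat QZ=W$. You instead observe that $Z^TZ=W^TW$ forces $\ker Z=\ker W$ (since $\|Zv\|=\|Wv\|$ for all $v$), so $Z(I-LN)=0$ immediately gives $W(I-LN)=0$. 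Your route is more elementary and self-contained: it uses only a weak consequence of the Gram hypothesis rather than the full existence theorem for an orthogonal intertwiner, which is a nontrivial (if standard) result. The paper's choice is natural in context only because Proposition~\ref{prop_Gram} is already needed elsewhere. Your closing remark is also accurate: the full-column-rank assumption on $ZL$ is not used in either argument (it only makes $N$ unique), and indeed the paper imposes it for later convenience (well-definedness of left inverses), not for this equivalence.
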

\begin{proof}
  i) $\Rightarrow$ ii). Obvious. 

ii) $\Rightarrow$ i). By Proposition~\ref{prop_Gram}, being $Z^T Z = W^T W$, there exists
  $\hat Q \in O(n)$ such that $\hat Q Z = W$.  Since $\Img ZL=\Img Z$, there exists $M \in \Real^{r \times m}$ such
  that $Z=Z L M$. Then, $W L M=\hat Q Z L M = \hat Q Z = W$. Then i)
  is obtained by left-multipling ii) by $M$.
    \end{proof}

In the following, we will assume that $Z$ is full column-rank. In
fact, if this is not the case, it is sufficient to pick $L$ such that $Z
L$ is full column-rank and to redefine $Z=ZL$, $W=WL$.

            The following proposition shows that, if~\eqref{eqn_cond_eq} holds,
            there always exists an orthonormal matrix $Q$ that
            satisfies $Q Z=W$. We can distinguish two cases. First, in
            the trivial case in which
          $\textrm{rank } Z=n$ (that is, $Z$ is full row-rank)
          the solution is unique, as
          shown in the following Proposition.
          \begin{proposition}
        \label{prop_solut_Q_case1}
        Let $P, G$ be a solution of~\eqref{eqn_cond_eq},
        let  $W,Z$ defined as in~\eqref{eqn_S_Z}
        be such that $\textrm{rank } W=n$.
       Set $T=P Q \sqrt{G}$. Then, $T$ is a solution of Problem~\ref{problem_id} if and only if
              \begin{equation}
                \label{eqn_Q_case1}
                Q=W Z^{-1}\,.
\end{equation}
\end{proposition}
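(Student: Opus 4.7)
The plan is to reduce the statement to a simple linear-algebraic identity, exploiting the fact that the rank hypothesis forces $Z$ to be square and invertible.

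First, I would recall how conditions~\eqref{eqn_cond_Q} arise: since $P=\sqrt{M}$ and $G$ already satisfy~\eqref{eqn_cond_eq}, Proposition~\ref{prop_M_E} ensures that the first condition of~\eqref{eqn_conditions} is automatically fulfilled for $T=PQ\sqrt{G}$, regardless of the choice of $Q\in O(n)$. Hence $T$ is a solution of Problem~\ref{problem_id} if and only if the second and third conditions of~\eqref{eqn_conditions} hold, which, as shown in the discussion preceding the proposition, are equivalent to the single matrix equation $QZ=W$, with $Z,W$ defined in~\eqref{eqn_S_Z}.

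Second, I would exploit the rank hypothesis. Under the standing assumption that $Z$ is full column-rank together with $\textrm{rank } W=n$, the identity $Z^TZ=W^TW$ (which follows from the second, third and fourth lines of~\eqref{eqn_cond_eq}, as already noted after~\eqref{eqn_S_Z}) forces $Z$ and $W$ to be square invertible matrices of size $n\times n$. In particular, the equation $QZ=W$ admits the unique solution $Q=WZ^{-1}$, which proves the ``only if'' direction.

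For the ``if'' direction it only remains to check that this $Q$ actually lies in $O(n)$, so that $T=PQ\sqrt{G}$ is a legitimate element of the parameterization~\eqref{eqn_par_of_T}. A one-line computation
\[
Q^TQ = Z^{-T}W^TWZ^{-1} = Z^{-T}(Z^TZ)Z^{-1} = I
\]
delivers this, using once more the identity $Z^TZ=W^TW$. I do not expect any real obstacle: once the feasibility of Problem~\ref{problem_id} has been recast as $QZ=W$ with $Z$ invertible, both uniqueness and orthogonality are automatic, and the only ingredient worth tracking is the identity $Z^TZ=W^TW$ coming from~\eqref{eqn_cond_eq}.
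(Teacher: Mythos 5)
Your proof is correct and follows essentially the same route as the paper's: reduce the problem to the linear equation $QZ=W$, observe that the rank hypothesis together with $Z^TZ=W^TW$ makes $Z$ square and invertible, and solve. You are in fact slightly more careful than the paper in explicitly verifying $WZ^{-1}\in O(n)$ via $Q^TQ=Z^{-T}W^TWZ^{-1}=I$, a fact the paper's converse direction uses implicitly when it writes $T^{-1}=\sqrt{G}^{-1}Q^TP^{-1}$ (note only that the symmetry of $GT^{-1}\hat A T$ for arbitrary $Q\in O(n)$ is a one-line computation from the first line of~\eqref{eqn_cond_eq}, rather than a literal citation of Proposition~\ref{prop_M_E}).
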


           \begin{proof}
             ($\Rightarrow$)
             By assumption $QZ=W$ has a solution, then, since
     $Z$ is full rank, it is invertible and $Q=WZ^{-1}$.
             ($\Leftarrow$)
             Assume that $Q$ is given by~(\ref{eqn_Q_case1}) and set $T= P
Q \sqrt{G}$, where $P$ and $G$ are a solution of~\eqref{eqn_cond_eq}.
Note that~\eqref{eqn_cond_eq} implies that $\hat A P^2=P^2 \hat A^T$, that
is $P^{-1} \hat A P= P \hat A^T P^{-1}$, so that $P^{-1}\hat A P$ is symmetric.
Then, also matrix
\[
\begin{gathered}
G T^{-1} \hat A T =  G \sqrt{G^{-1}} Q^T P^{-1}
\hat A P Q \sqrt{G} =\\
\sqrt{G} Q^T P^{-1}
\hat A P Q \sqrt{G} 
\end{gathered}
\] is symmetric,
proving the first of~(\ref{eqn_conditions}). Moreover,
$Q Z=W Z^{-1} Z=W$, which implies conditions~\eqref{eqn_cond_Q}.                    
\end{proof}

If $\textrm{rank }Z<n$, the solution $Q$ of $QZ=W$ is not unique,
            since, if  $Q$ satisfies $QZ =W$ and
            $\hat Q$ is any orthonormal matrix such
            that $\hat Q Z = Z$, then also $Q \hat Q Z =W$.
In fact, the following proposition shows that the set of possible
solutions $Q$ is parameterized by $O(n-\textrm{rank }Z)$.
      
      \begin{proposition}
        \label{prop_solut_Q}
        Let $P, G$ be a solution of~\eqref{eqn_cond_eq},
        let  $W,Z$ be defined as in~\eqref{eqn_S_Z} with $\textrm{rank }Z<n$
       and let $\bar W$, $\bar Z$ be orthonormal matrices
       such that $\Img \bar W=(\Img W)^\perp$, $\Img \bar Z=(\Img Z)^\perp$.
       Set $T=P Q \sqrt{G}$. Then, $T$ is a solution of Problem~\ref{problem_id} if and only if
              \begin{equation}
                \label{eqn_Q}
                Q=W Z^+ + \bar{W} \bar U \bar Z^T,
\end{equation}
                where
              $\bar U \in O(n-\textrm{rank }Z)$.
            \end{proposition}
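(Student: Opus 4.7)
The plan is to reduce everything to the matrix equation $Q Z = W$ on orthogonal matrices. Rerunning the argument of Proposition~\ref{prop_solut_Q_case1}, as soon as $P, G$ solve~(\ref{eqn_cond_eq}) and $Q$ is orthogonal, the first condition of~(\ref{eqn_conditions}) is automatic, and the remaining two conditions~(\ref{eqn_cond_Q}) rewrite precisely as $Q Z = W$ with $W, Z$ as in~(\ref{eqn_S_Z}). Since~(\ref{eqn_cond_eq}) forces $W^T W = Z^T Z$, the question becomes: given two matrices with the same Gram matrix, parameterize all $Q \in O(n)$ satisfying $Q Z = W$.

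Two simple facts will do all the work. First, because $Z$ is full column-rank, $Z^+ = (Z^T Z)^{-1} Z^T$ is a genuine left inverse of $Z$ and $Z Z^+$ is the orthogonal projection onto $\Img Z$. Second, the orthonormal basis $\bar Z$ of $(\Img Z)^\perp$ satisfies $\bar Z^T \bar Z = I$, $\bar Z^T Z = 0$ and $\bar Z \bar Z^T = I - Z Z^+$, with the analogous identities holding for $\bar W$; in particular $\Img W$ and $\Img Z$ have the same dimension thanks to the Gram equality, so the sizes of $\bar W \bar U \bar Z^T$ in~(\ref{eqn_Q}) are consistent.

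For the "if" direction, I plug~(\ref{eqn_Q}) into $Q Z$ and use $Z^+ Z = I$ together with $\bar Z^T Z = 0$ to obtain $Q Z = W$ immediately. Expanding $Q^T Q$, the cross terms vanish because $W^T \bar W = 0$; the two surviving blocks reduce, via $W^T W = Z^T Z$ and $\bar U^T \bar U = I$, to $Z Z^+ + \bar Z \bar Z^T = I$. Hence $Q \in O(n)$.

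For the "only if" direction, given $Q \in O(n)$ with $Q Z = W$, I decompose $Q = Q Z Z^+ + Q \bar Z \bar Z^T = W Z^+ + Q \bar Z \bar Z^T$ using $I = Z Z^+ + \bar Z \bar Z^T$. The key computation is $W^T (Q \bar Z) = Z^T Q^T Q \bar Z = Z^T \bar Z = 0$, which shows $\Img(Q \bar Z) \subseteq (\Img W)^\perp = \Img \bar W$. Consequently $Q \bar Z = \bar W \bar U$ for a unique $\bar U = \bar W^T Q \bar Z$, and then $\bar U^T \bar U = (\bar W \bar U)^T (\bar W \bar U) = \bar Z^T Q^T Q \bar Z = I$ gives $\bar U \in O(n - \textrm{rank } Z)$. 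The main thing to be careful about is keeping the two orthogonal decompositions of $\Real^n$ induced by $Z$ and by $W$ aligned; however, nothing deep is required, since the Gram equality already guarantees that the ranks of the two projections $Z Z^+$ and $W W^+$ coincide.
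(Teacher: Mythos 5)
Your proof is correct and takes essentially the same route as the paper: reduce to $QZ=W$ (with the first of~\eqref{eqn_conditions} automatic from~\eqref{eqn_cond_eq} and the orthogonality of $Q$, as in Proposition~\ref{prop_solut_Q_case1}), then parameterize all orthogonal solutions of $QZ=W$ using the decomposition $I=ZZ^+ + \bar Z\bar Z^T$ and the fact that $Q\bar Z$ must span $(\Img W)^\perp$. The only cosmetic difference is that the paper outsources this parameterization to the appendix lemma (Proposition~\ref{prop_orthog_par}) and establishes orthogonality via the Gram-matrix fact (Proposition~\ref{prop_Gram}), whereas you verify $Q^TQ=I$ by direct block expansion.
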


            \begin{proof}
($\Rightarrow$) Let $P$, $G$ be a solution of~\eqref{eqn_cond_eq}
and set $T=P Q \sqrt{G}$. 
Then, $Q$ satisfies~\eqref{eqn_cond_Q} or, equivalently, $Q Z= W$.
Then, the thesis follows from Proposition~\ref{prop_orthog_par}.

($\Leftarrow$)
It is the same as the proof of the necessity
of Proposition~\eqref{prop_solut_Q_case1}, with the difference that, in this
case, $Q Z=W Z^+ Z+ \bar W \bar Q \bar Z^T Z=W$, which implies conditions~\eqref{eqn_cond_Q}.

\end{proof}

\addtocounter{example}{-1}
\begin{example} [continued]
  We consider again example~\ref{example_1}, and we choose a particular solution for
  $P$ and $G$ by setting $\alpha_1=\alpha_2=1$
  in~\eqref{eqn_sol_ex_1}. We made this choice in order to have $G=I$.
  We apply Proposition~\eqref{prop_solut_Q} to find the rotation component $Q$.
  In this case, $B$ is not present and $\dim (\Img W)^\perp=1$. For
  this reason $\bar{U} \in O(1)$, since $O(1)=\{-1,1\}$, there are two
  possible solutions for $Q$, given by
  \[
Q_1=W Z^+ + \bar{W} \bar Z^T,
Q_2=W Z^+ - \bar{W} \bar Z^T\,.
    \]

The corresponding transformation matrices $T_1, T_2$ are obtained
from~\eqref{eqn_par_of_T} and the symmetric part $S$
of~\eqref{eqn_conditions} by relation $S_i=G T_i^{-1} \hat A T_i$,
$i=1,2$, that is
{\small
\[
S_1=\left(\begin{array}{cccc} -4 & 1 & 1 & 2\\ 1 & -1 & 0 & 0\\ 1 & 0
                                     & -2 & 1\\ 2 & 0 & 1 &
                                                            -3 \end{array}\right),    S_2=   \left(\begin{array}{cccc} -4 & 1 & 1 & -2\\ 1 & -1 & 0 & 0\\ 1 & 0 & -2 & -1\\ -2 & 0 & -1 & -3 \end{array}\right).\]
}
These are all the solutions of Problem~\ref{problem_id} that correspond to
the chosen values for $P$ and $G$. Note that only $S_1$ is Metzler,
so that it is the only solution of Problem~\ref{problem_mer}, moreover
$S_1=S$, where $S$ is in~\eqref{eqn_S_ex1}.
Figures~\ref{fig:es1_vero} and~\ref{fig:es1_falso} represent the
graphs associated to matrices
$S_1$ and $S_2$. In these and in next graph figures, red nodes
denote unmeasured outputs.

\begin{figure}[!h]
    \centering
    \includegraphics[width=0.4\columnwidth]{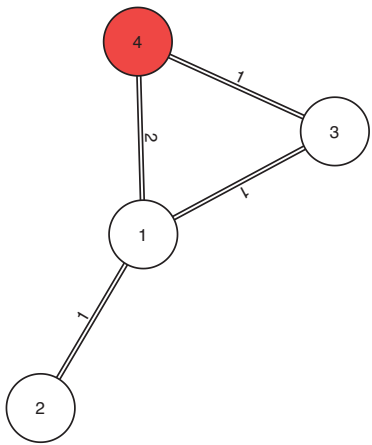}
    \caption{Graph representation of $S_1$}
    \label{fig:es1_vero}
  \end{figure}

  \begin{figure}[!h]
    \centering
    \includegraphics[width=0.4\columnwidth]{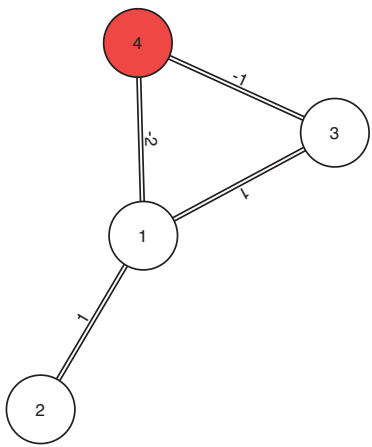}
    \caption{Graph representation of $S_2$}
    \label{fig:es1_falso}
\end{figure}

\end{example}

\section{Discussion of Problems~\ref{problem_mer} and~\ref{problem_mimin}}
\label{sect_problems_2_3}
Proposition~\ref{prop_solut_Q} shows that, in the general case,
Problem~\ref{problem_id}
has multiple solutions.
We introduced Problems~\ref{problem_mer} and~\ref{problem_mimin} in
order to find specific solutions that satisfy additional properties. Consider a solution of form~\eqref{eqn_par_of_T} and assume
  that $P$ and $G$ are fully known. Then, 
  Problem~\ref{problem_mer} consists in finding an orthonormal matrix
  $Q$ such that $T^{-1} \hat A T$ is Metzler, or equivalently, finding
  an orthonormal matrix
  $\bar U$ that satisfies the following equation.

  \begin{equation}
    \label{eqn_for_prob_2}
    \begin{array}{ll}
   \left(\sqrt{G}^{-1} Q^T P^{-1} \hat A P Q \sqrt{G}\right)_{i,j}\geq
      0,\,
   i \neq j\\
    Q=W Z^+ + \bar W \bar U \bar Z^T
\end{array}
    \end{equation}

    Note that Problem~\eqref{eqn_for_prob_2} is non-convex due to the
    orthonormality constraint on $\bar U$ (i.e., $\bar U^T \bar U=I$).
    Anyway, because of Proposition~\ref{prop_solut_Q}, the dimension
    of $\bar U$  may be small, so that, in some cases, solving~\eqref{eqn_for_prob_2} can still be a simple task.

    Problem~\ref{problem_mimin} adds the requirement of minimizing
    $\|S\|_0$, or, equivalently, minimizing $\|T^{-1} \hat A T\|_0$. Since the
    minimization of the zero-norm is a difficult task, as commonly
    done (see, for instance,~\cite{candes2008enhancing}), one can use
    the $1$-norm as a sparsity-promoting objective function, obtaining
    the following problem:
    \begin{equation}
      \label{eqn_for_prob_3}
 \min_{\bar U}  \left \|\sqrt{G}^{-1} Q^T P^{-1} \hat A P Q \sqrt{G}\right\|_1
\textrm{ such that~\eqref{eqn_for_prob_2} holds.}
\end{equation}

We can rewrite this problem more explicitly as
    \begin{equation}
      \label{eqn_for_prob_3_2}
       \begin{array}{ll}
 \underset{\bar U}{\min}  \left \|\sqrt{G}^{-1} Q^T P^{-1} \hat A P Q
   \sqrt{G}\right\|_1\\
\textrm{ such that}\\
   \left(\sqrt{G}^{-1} Q^T P^{-1} \hat A P Q \sqrt{G}\right)_{i,j}
         \geq 0,\,
   i \neq j\\
   Q=W Z^+ + \bar W \bar U \bar Z^T\\
   \bar U \bar U^T=I.
\end{array}
\end{equation}

\subsection{Overall algorithm for Problem~\ref{problem_mimin}}
\label{sec_alg}
Leveraging Proposition~\ref{prop_D_G}, we can formulate the following
algorithm for solving Problem~\ref{problem_mimin}.
Here the problem data are the identified model $\hat A$, $\hat B$,
$\hat C$ and the required input and output matrices $B$, $C$. The
final output is given by matrices $P$, $G$ and $\bar U$, that give transformation $T$ by~\eqref{eqn_par_of_T} and~\eqref{eqn_Q}.

\begin{itemize}
\item Solve Problem~\eqref{eqn_problem_nonl} in order to find a
  solution $P$, $G$. Note, that, by Remark~\ref{rem_num_sol}, in many
  cases, this solution is unique up to a scaling factor.
\item Solve Problem~\eqref{eqn_for_prob_3_2} using a nonlinear local
  search algorithm. In our tests we used different randomly generated initial conditions $\bar U_0$
  for $\bar U$ and selected the best solution.
  \end{itemize}

Some remarks are in order on the choice of the initial
  conditions $\bar U_0$ for $\bar U$. Note that $O(n)$ has two
  connected components given by
  $\{e^{S} A, S \in \Real^{n \times n}: S=-S^T, A \in \{I,M\}\}$, where $M$ is the diagonal matrix with
  all ones on the diagonal apart from a term $-1$ on the first element
  and $S$ is a skew-symmetric matrix. This comes from the facts the set
  of skew-symmetric matrices is the Lie algebra of $O(n)$ and that $I$ and $M$
  belong to separate connected components of $O(n)$.
Hence, we can generate a random initial guess $\bar U_0$ for $\bar U$
by setting $\bar U =e^{S} A$, where $S$ is a random skew-symmetric
matrix and $A$ is randomly chosen between $I$ and $M$.

\subsection{Case of data affected by noise}

Real input and output data are affected by noise. 
In this case, Problem~\ref{prop_D_G} may not have a feasible solution
and can be substituted with the following relaxed one.

\begin{equation}
  \label{eqn_problem_nonl_rel}
  \left\{
    \begin{array}{lll}
\min \|\hat C V D V^T \hat C^T-C G^{-1} C^T\|^2+
 \| \hat B^T W^T D^{-1} W \hat B -B^T G^{-1} B\|^2+
\| \hat C \hat B- C G^{-1} B\|^2\\
\textrm{subject to }\\
      D >0 \\
  \Lambda D= D \Lambda \\
   G > 0
    \end{array}
 \right.
\end{equation}
Note that if second or third conditions (the requirement on $B$ and
$C$) are not present in~\eqref{eqn_conditions},
Problem~\eqref{eqn_problem_nonl_rel} becomes a convex one. In fact if,
for instance, the third conditions is missing, by setting $H=G^{-1}$ the objective function
reduces to the convex one $\|\hat C V D V^T \hat C^T-C H C^T\|^2$.

\section{Examples}

In this section, we consider some examples of larger dimension.
In order to test the possibility of recovering the network connections,
we randomly generate some autonomous systems in
form~\eqref{eqn_RC_intro} (the ``true'' systems) with
the following procedure. Given a number of states $n$, we set
$G=I$ and $S=-\mathcal{I} K \mathcal{I}^T$, where $\mathcal{I}$ is the incidence
matrix
of a randomly generated graph of $n$ vertices and $K$ is a diagonal
matrix of randomly generated conductances (with integer values). Then,
we compute a random transformation matrix $\tilde T$ and set $\hat
A=\tilde T^{-1} A \tilde T$, $\hat C= C \tilde T$. We consider as output matrix $C$ the projection on the first $m$
components.
In all examples, $\hat A$ is diagonalizable and
condition~\eqref{eq_cond_minim_m} is
satisfied. Hence, $P$ and the first $m$ component along the
diagonal of $G$ have only one solution, up to a positive
scaling factor. The remaining elements of the diagonal $G$ are undetermined, since
they do not appear in Problem~\eqref{eqn_prob_conv_ex}. For
simplicity, we chose the scaling factor such that the reconstructed
$G$ is the identity.
We solved Problem~\ref{problem_mimin} with the algorithm presented in
Section~\ref{sec_alg} and computed the corresponding transformation
matrix $T$ and the reconstructed matrix $S$ as
$\hat S=G T^{-1} \hat A T$. Then, we compared matrix
$S$ of the true system with the reconstructed one $\hat S$, to check if we have been able
to correctly reconstruct the network connections. We considered the following two cases.

\subsection{Case 1: $n=10$, $m=8$}
In this example, we do not measure the potential of the last $2$ nodes,
that is, matrix $C$ in~\eqref{eqn_conditions} is the projection on the
first
$8$ nodes.
Generically (see Remark~\ref{rem_num_sol}), $P$ and $G$ are unique (up to a positive scaling), apart
from the last two components of the diagonal of $G$, that are
undetermined. By Proposition~\eqref{prop_solut_Q}, since $\textrm{ dim}
\textrm{ ker } C=2$, the component $Q$ of~\eqref{eqn_par_of_T} has
multiple solution, parameterized by $\bar U \in O(2)$. The algorithm
in Section~\ref{sec_alg} allows finding one among such solutions. Figure~\ref{fig:es2_1_vero} is the graph associated to $S$
while
Figure~\ref{fig:es2_1_falso} is the one associated to the
reconstructed $\hat S$. Note that the two graphs
are similar but different. That is, at the end of our procedure, we
found a reconstructed system of form~\eqref{eqn_RC_intro} which solves
Problem~\ref{problem_mer} (and, approximately,
Problem~\ref{problem_mimin}), but is different from the true
system. This is unavoidable since, by
Proposition~\eqref{prop_solut_Q}, there are multiple
systems that solve Problem~\ref{problem_id} and, in general, there may
be multiple solutions also of Problems~\ref{problem_mer} and~\ref{problem_mimin}.

\begin{figure}[!h]
    \centering
    \includegraphics[width=0.6\columnwidth]{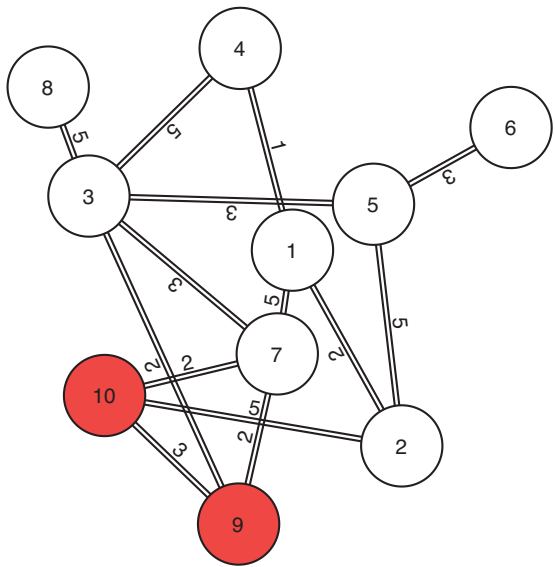}
    \caption{Case 1: true connections}
    \label{fig:es2_1_vero}
  \end{figure}

  \begin{figure}[!h]
    \centering
    \includegraphics[width=0.6\columnwidth]{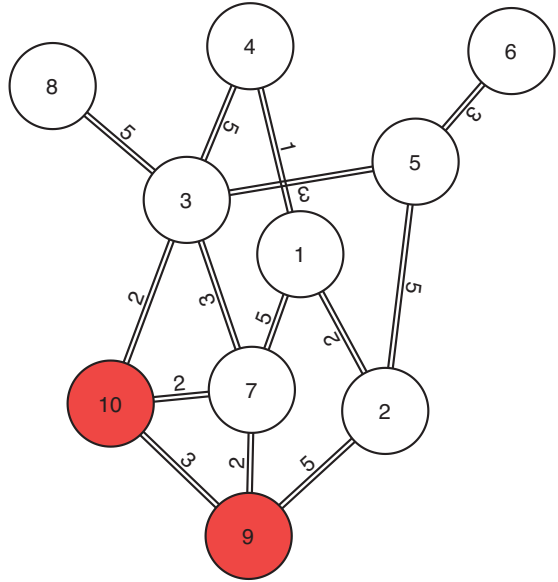}
    \caption{Case 1: reconstructed connections}
    \label{fig:es2_1_falso}
\end{figure}

\subsection{Case 2: $n=12$, $m=6$}
In this example, we measure the potential of $6$ nodes out of $12$.
Again, Figure~\ref{fig:es2_2_vero} refers to the true matrix $S$
while Figure~\ref{fig:es2_2_falso} refers to the reconstructed $\hat
S$.
In this case, $Q$ has multiple solutions parameterized by $\bar U \in O(6)$.
Again, the reconstructed matrix is different from the true
one, namely, at the end of our procedure, we found one of the multiple
solutions that solve Problem~\ref{problem_mer}.

\begin{figure}[!h]
    \centering
    \includegraphics[width=0.6\columnwidth]{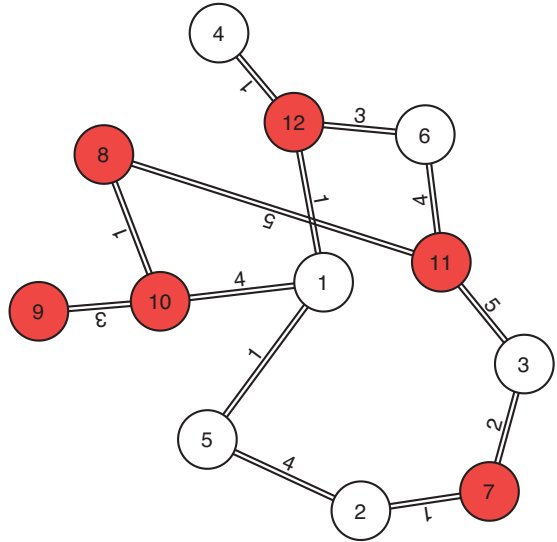}
    \caption{Case 2: true connections}
    \label{fig:es2_2_vero}
  \end{figure}

  \begin{figure}[!h]
    \centering
    \includegraphics[width=0.6\columnwidth]{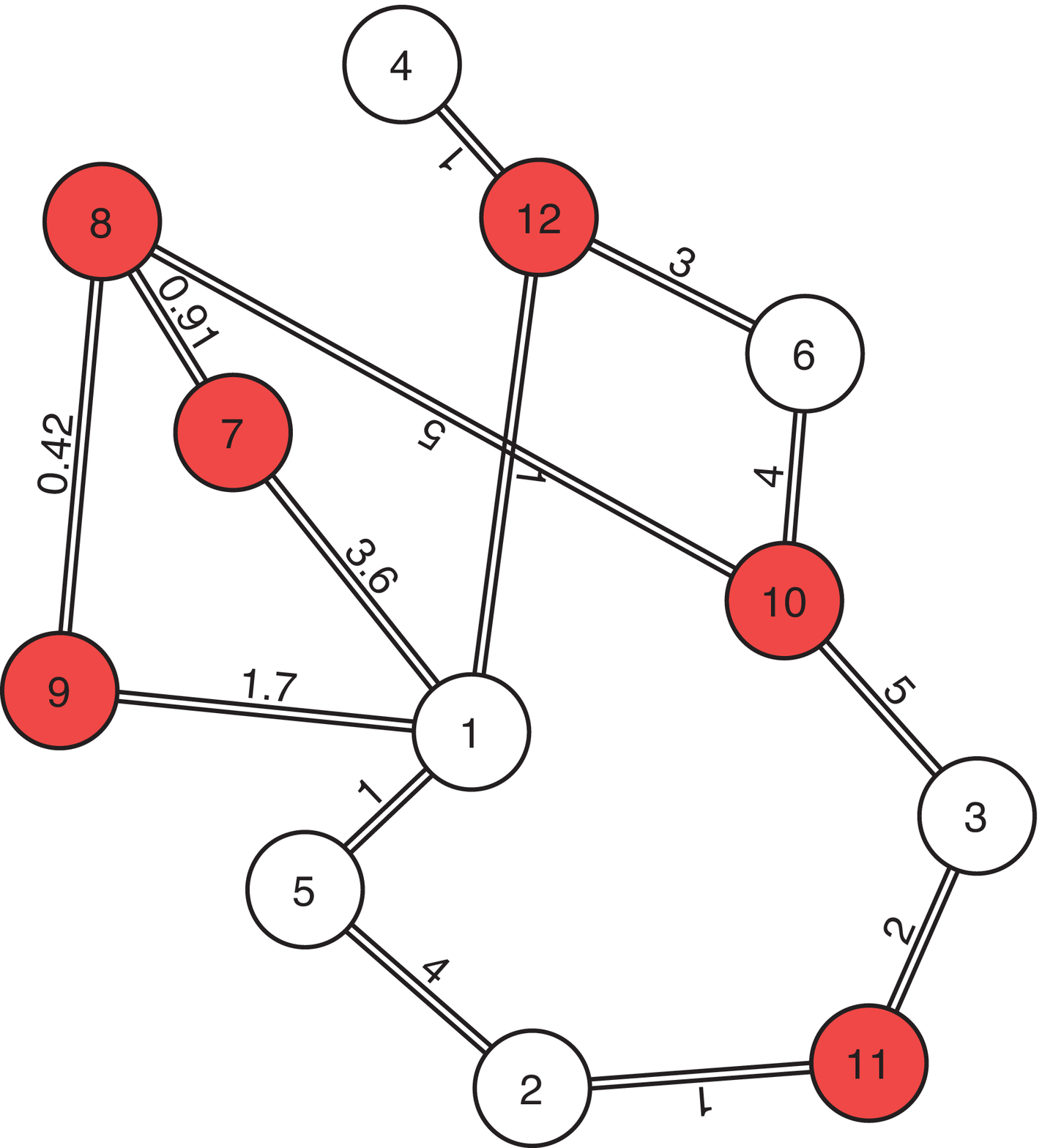}
    \caption{Case 2: reconstructed connections}
    \label{fig:es2_2_falso}
\end{figure}

\section{Conclusions}
Resistive-capacitive (RC) networks are used to model various systems in engineering, physics or
biology. We considered a structured identification task, characterized
the solution set and presented a possible algorithm for reconstructing
the network connections. 
\section*{Appendix}

\subsection*{Proof of Proposition~\ref{prop_M_E}}

      ($\Rightarrow$)
      Assume that~(\ref{eqn_conditions}) has a solution $T$, $S$, $G$, then
    $G T^{-1} \hat A T$ is symmetric, which implies that $G T^{-1}
    \hat A T = T^T
    \hat A^T T^{-T} G$, which, setting $M=T G^{-1} T^T$, implies the first of~(\ref{eqn_cond_eq}).
    The second of~(\ref{eqn_conditions}) implies $T^T\hat{C}^T =C^T$, thus  $\hat C T G^{-1} T^T \hat{C}^T= C G^{-1} C^T$, that
    is the second of~(\ref{eqn_cond_eq}).
    Similarly, the third of~(\ref{eqn_conditions}) implies $\hat B^T T^{-T} G T^{-1} \hat B= B^T G^{-1} B$, that
    is the third of~(\ref{eqn_cond_eq}). Finally, by the second and
    third of~\eqref{eqn_conditions} $C
     G^{-1} B = \hat C T T^{-1} \hat B= \hat C \hat B$.
      
      ($\Leftarrow$)
      Assume that~(\ref{eqn_cond_eq}) has a solution
      $M, G$. Let $UU^T=M$ be the Cholesky decomposition of $M$. The
      second, third and fourth conditions of~(\ref{eqn_cond_eq}) imply that
      \[
        \begin{gathered}
       \left(\begin{array}{ll}\hat C U,\\ \hat B^T U^{-T}\end{array}\right)
        \left(\begin{array}{ll}
U^T \hat C^T,\quad U^{-1} \hat B
              \end{array}
            \right) \\ =\left(\begin{array}{ll} C G^{-1/2}\\ B^T
                                G^{-1/2} \end{array} \right)
        \left(\begin{array}{ll}
                G^{-1/2} C^T,\quad
                G^{-1/2} B
              \end{array}
            \right)\,.
            \end{gathered}
        \]
        Then, by Proposition~\ref{prop_Gram}, there exists an orthonormal matrix $Q$ such that
         $\left(\begin{array}{ll}\hat C U\\ \hat B^T
           U^{-T}\end{array} \right) Q= \left(\begin{array}{ll} C
                                                G^{-1/2}\\ B^T G^{-1/2}\end{array}\right)$, that is
         \[
           \begin{array}{ll}
             \hat C U Q =C G^{-1/2}\\
             \hat B^T
           U^{-T} Q =B^T G^{-1/2}
             \end{array}
           \]
           and, setting $T=U Q G^{1/2}$, it follows that $\hat C T=C$
           and $T^{-1} \hat B=B$. Finally,   $\hat A M=M \hat{A}^T$
           implies that $\hat A=M \hat A^T M^{-1}=U U^T \hat A^T
           U^{-T} U^{-1}$ and  $ G T^{-1} \hat A T = G G^{-1/2} Q^T U^{-1} \hat A
           U Q G^{1/2}=G^{1/2}  Q^T U^{-1} U U^T \hat A^T U^{-T}
           U^{-1} U Q G^{1/2}$ $= G^{1/2}  Q^T U^T
           \hat A^T U^{-T} Q G^{-1/2} G = T^T \hat A^T T^{-T}
           G$. Hence $G T^{-1} \hat
           A T$ is symmetric, which proves the first of~(\ref{eqn_conditions}).

The following is a well-known property of Gram matrices (see for instance
Theorem~3.1 of~\cite{horn1996does}.

\begin{proposition}
  \label{prop_Gram}
  Let $A,B \in \Real^{n \times m}$ be such that
  $A^T A= B^T B$, then there exists $Q \in O(n)$ such that
  $A = Q B$.
\end{proposition}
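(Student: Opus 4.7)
The plan is to prove Proposition~\ref{prop_Gram} by constructing compatible singular value decompositions of $A$ and $B$ that share the same right factor and the same singular values, and then stitching the left factors together via an orthogonal $Q$. The hypothesis $A^T A = B^T B$ is exactly what is needed to synchronize the right factors.

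First I would diagonalize the common Gram matrix. Since $A^T A = B^T B$ is symmetric and positive semidefinite, write a spectral decomposition
\begin{equation*}
A^T A \;=\; B^T B \;=\; V \begin{pmatrix} \Sigma^2 & 0 \\ 0 & 0 \end{pmatrix} V^T,
\end{equation*}
with $V \in O(m)$ and $\Sigma = \diag(\sigma_1,\dots,\sigma_r)$ collecting the positive singular values ($r$ is the common rank of $A$ and $B$). Splitting $V = [V_1 \; V_2]$ with $V_1 \in \Real^{m \times r}$, a direct check using $V^T A^T A V = \Sigma^2 \oplus 0$ shows $A V_2 = 0$ and $(A V_1 \Sigma^{-1})^T (A V_1 \Sigma^{-1}) = I_r$, so the columns of $U_A := A V_1 \Sigma^{-1} \in \Real^{n \times r}$ are orthonormal and $A = U_A \Sigma V_1^T$. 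Running the same argument on $B$ gives $B = U_B \Sigma V_1^T$ with $U_B \in \Real^{n \times r}$ having orthonormal columns as well.

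Next I would build $Q$. Extend $U_A$ and $U_B$ to full orthogonal matrices $\tilde U_A = [U_A \; U_A^\perp]$ and $\tilde U_B = [U_B \; U_B^\perp]$ in $O(n)$ by completing to orthonormal bases of $\Real^n$. Define $Q := \tilde U_A \tilde U_B^T$. Then $Q \in O(n)$ as a product of orthogonal matrices, and $Q U_B = U_A$, so
\begin{equation*}
Q B \;=\; Q U_B \Sigma V_1^T \;=\; U_A \Sigma V_1^T \;=\; A,
\end{equation*}
completing the proof.

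The only step that needs a little care is the rank-deficient case: one must verify that $A$ and $B$ have the same rank (which follows from $\operatorname{rank}(A) = \operatorname{rank}(A^T A) = \operatorname{rank}(B^T B) = \operatorname{rank}(B)$) and that $A V_2 = 0$ (and likewise for $B$), so that the zero block of the spectrum really does correspond to a null part of $A$ and $B$, not just to cancellation in the Gram matrix. Once this is in place, the SVD construction above goes through verbatim. Note the completion $U_A^\perp, U_B^\perp$ is not unique, which is why $Q$ itself is not unique when $r < n$, consistent with the parameterization already exploited in Proposition~\ref{prop_solut_Q}.
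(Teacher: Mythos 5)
Your proof is correct. Note that the paper does not actually prove Proposition~\ref{prop_Gram}: it states it as a well-known property of Gram matrices and refers to Theorem~3.1 of the cited reference by Horn et al., so there is no in-paper argument to compare against line by line. What you supply is the standard singular-value-decomposition proof of that classical fact: diagonalize the common Gram matrix $A^TA=B^TB$ to obtain SVDs $A=U_A\Sigma V_1^T$ and $B=U_B\Sigma V_1^T$ sharing the right factor and singular values, then set $Q=\tilde U_A\tilde U_B^T$ after completing $U_A,U_B$ to orthonormal bases. All the steps check out, including the two points that genuinely need care: that $AV_2=0$ (since $V_2^TA^TAV_2=0$ forces each column of $AV_2$ to have zero norm, so the zero eigenvalue block really annihilates $A$ and $B$, giving $A=AV_1V_1^T$), and that $\mathrm{rank}\,A=\mathrm{rank}\,A^TA=\mathrm{rank}\,B^TB=\mathrm{rank}\,B$ so a common $r$ and $\Sigma$ exist. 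Your closing observation that the orthogonal completion is non-unique when $r<n$, so $Q$ is determined only up to an element of $O(n-r)$, is exactly the structure the paper exploits later in Proposition~\ref{prop_orthog_par} and Proposition~\ref{prop_solut_Q}, so the proof fits the surrounding development well.
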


The following proposition is a property of orthonormal transformations.

\begin{proposition}
  \label{prop_orthog_par}
  Let $A,B \in \Real^{n \times m}$ with $A^TA=B^T B$ and
  $\textrm{rank } A =m < n$,
  let $\bar A$, $\bar B$ be such that $\bar A^T \bar A=I$, $\bar
  B^T \bar B=I$, $\bar B^T B=0$, $\bar A^T A=0$,
  and let $Q \in \mathbb{R}^{n \times n}$.
  Let $\mathcal{R}=\{Q \in O(n): Q A = B\}$ and
  $\mathcal{S}=\{B A^++ \bar B U \bar A^T, U \in O(n-m)\}$,
  then $\mathcal{R}=\mathcal{S}$.
 \end{proposition}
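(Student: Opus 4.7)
The plan is to prove both inclusions $\mathcal{S} \subseteq \mathcal{R}$ and $\mathcal{R} \subseteq \mathcal{S}$, leveraging throughout the key identity
\[
A A^+ + \bar A \bar A^T = I_n,
\]
which decomposes $I_n$ into the orthogonal projection $A A^+ = A (A^T A)^{-1} A^T$ onto $\Img A$ (well-defined because $A$ has full column rank $m$) plus the projection $\bar A \bar A^T$ onto $(\Img A)^\perp$. Indeed, $\bar A^T A = 0$ together with $\bar A^T \bar A = I$ forces the $n-m$ columns of $\bar A$ to form an orthonormal basis of $(\Img A)^\perp$.

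For $\mathcal{S} \subseteq \mathcal{R}$, given $Q = B A^+ + \bar B U \bar A^T$ with $U \in O(n-m)$, I would verify the two defining properties of $\mathcal{R}$. The equation $QA = B$ follows immediately from $A^+ A = I_m$ and $\bar A^T A = 0$. For orthogonality, expanding $Q^T Q$ gives four terms: the diagonal ones reduce, via $B^T B = A^T A$, $\bar B^T \bar B = I$, and $U^T U = I$, to $A A^+$ and $\bar A \bar A^T$ respectively; the two cross terms vanish because $\bar B^T B = 0$ implies $B^T \bar B = 0$. Summing yields $Q^T Q = A A^+ + \bar A \bar A^T = I_n$, and since $Q$ is square this gives $Q \in O(n)$.

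For $\mathcal{R} \subseteq \mathcal{S}$, given $Q \in O(n)$ with $QA = B$, I would multiply the decomposition of $I_n$ on the left by $Q$ to obtain
\[
Q = Q A A^+ + Q \bar A \bar A^T = B A^+ + Q \bar A \bar A^T,
\]
and then show $Q \bar A = \bar B U$ for some $U \in O(n-m)$. Since $Q$ is an isometry with $Q (\Img A) = \Img B$, it maps $(\Img A)^\perp$ onto $(\Img B)^\perp = \Img \bar B$, so every column of $Q \bar A$ lies in $\Img \bar B$, and the representation $Q \bar A = \bar B U$ exists and is unique (defining $U = \bar B^T Q \bar A$). Orthogonality of $U$ then follows from $U^T U = (\bar B U)^T (\bar B U) = \bar A^T Q^T Q \bar A = \bar A^T \bar A = I_{n-m}$, and substituting back gives $Q = B A^+ + \bar B U \bar A^T \in \mathcal{S}$.

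The main subtlety is the spanning fact that the columns of $\bar A$ (and of $\bar B$) exhaust the orthogonal complement of $\Img A$ (respectively $\Img B$); this is a dimension argument relying on $\dim(\Img A)^\perp = n - m$ matching the number of columns of $\bar A$. Once this is in hand, both inclusions are routine bookkeeping with the block identity above.
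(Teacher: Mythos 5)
Your proof is correct, and its overall skeleton matches the paper's: both directions hinge on the block identity $\left(\begin{smallmatrix}A^+\\ \bar A^T\end{smallmatrix}\right)=\left(\begin{array}{cc}A & \bar A\end{array}\right)^{-1}$, which is the same fact as your resolution of the identity $AA^++\bar A\bar A^T=I$, and your $\mathcal{R}\subseteq\mathcal{S}$ argument (set $U=\bar B^TQ\bar A$, check $U\in O(n-m)$, substitute back) is essentially the paper's verbatim. The one genuine divergence is in $\mathcal{S}\subseteq\mathcal{R}$: the paper invokes its Gram-matrix lemma (Proposition~\ref{prop_Gram}) on the augmented matrices $\left(\begin{array}{cc}A & \bar A\end{array}\right)$ and $\left(\begin{array}{cc}B & \bar BU\end{array}\right)$ to obtain an orthogonal $Q$ and then identifies it with the formula, whereas you verify $Q^TQ=I$ by direct expansion of the four terms, using $B^TB=A^TA$, $B^T\bar B=0$, and $U^TU=I$. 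Your route is more self-contained (it needs no external existence lemma, only the left-inverse identity $A^+A=I$ and the projection decomposition), at the cost of a slightly longer computation; the paper's route recycles a lemma it needs elsewhere anyway. You are also right to flag, and correctly dispatch, the spanning fact that $\Img\bar A=(\Img A)^\perp$ and $\Img\bar B=(\Img B)^\perp$ by the dimension count $n-m$ — a point the paper's proof uses implicitly without comment.
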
 

 \begin{proof}
(Proof that $\mathcal{R} \subset \mathcal{S}$.)

Let $Q \in \mathcal{R}$, then $Q A=B$ and $B^T Q \bar A=(Q A)^T Q \bar A=A^T \bar A=0$. Hence, $Q \bar A$ is orthogonal to
$B$ and the image of $Q \bar A$ belongs to the image of $\bar B$. This
implies that there exists a matrix $U$ such that $Q \bar A = \bar B U$.
Moreover, $\bar A^T Q^T Q \bar A=I=U^T \bar B^T \bar B U$ so that
$\bar B U \in O(n-m)$.
Then, $Q\vetdr{A} {\bar A}= \vetdr{B} {\bar B U}$.
Note that $\vetdc{A^+}{\bar A^T}\vetdr{A}{\bar A}= I$, so that
$\vetdc{A^+}{\bar A^T}=\vetdr{A}{\bar A}^{-1}$ and
$Q=\vetdr{B}{\bar BU}\vetdc{A^+}{\bar A^T}= B A^+ + \bar B U \bar A^T$.

(Proof that $\mathcal{S} \subset \mathcal{R}$.)

Let $U \in O(n-m)$, note that
   \[
\vetdc{B^T}{U^T \bar B^T} \vetdr{B}{\bar BU} 
=\matdxd{B^TB} {0} {0}{I}=\matdxd{A^TA}{ 0}{ 0}{ I}
=\vetdc{A^T}{\bar A^T} \vetdr{A}{\bar A}, 
   \]
then, by Proposition~\ref{prop_Gram}, there exists $Q \in O(n)$ such
that $\vetdr{B}{\bar BU} = Q  \vetdr{A}{\bar A}$, so that $Q A=B$.
Finally, since
$\vetdc{A^+}{\bar A^T}=\vetdr{A}{\bar A}^{-1}$, it follows that
$Q=\vetdr{B}{\bar BU}\vetdc{A^+}{\bar A^T}= B A^+ + \bar B U \bar A^T$.
\end{proof}

\bibliography{biblio}
\end{document}